\newcommand{\R}{\mathbb{R}} 
\newcommand{\Z}{\mathbb{Z}}
 \newcommand{\ox}{{\cal O}_X}
\newtheorem{theorem}{Theorem}[section]
\newtheorem{proposition}[theorem]{Proposition}
\newtheorem{lemma}[theorem]{Lemma}
\newtheorem{example}[theorem]{Example}
\newtheorem{definition}[theorem]{{\bf Definition}}
\numberwithin{equation}{section}
\def\lie#1{\mathfrak{#1}}
\def\tlie#1{\tilde{\mathfrak{#1}}}
\def\gb#1{{\mbox{\boldmath $#1$}}}
\def\gb#1{{\mbox{\boldmath $#1$}}}
\def\cal#1{\text{$\mathcal{#1}$}}
\def\opl_#1^#2{\text{\scriptsize$\bigoplus\limits_{\text{\footnotesize$#1$}}^{\text{\footnotesize$#2$}}$}}
\begin{document}

\title{On Moduli Spaces for Abelian Categories}
\author{Vyacheslav Futorny \\ IME - USP \\
Departamento de Matem\'atica \\ Caixa Postal 66281 \\
05315-970 São Paulo-SP, Brazil \\ \\
Marcos Jardim and Adriano Moura \\ IMECC - UNICAMP \\
Departamento de Matem\'atica \\ Caixa Postal 6065 \\
13083-970 Campinas-SP, Brazil}
\date{}

\maketitle

\begin{abstract}
We show that if $\cal A$ is an abelian category satisfying certain mild conditions,
then one can introduce the concept of a moduli space of (semi)stable objects which
has the structure of a projective algebraic variety. This idea is applied to several
important abelian categories in representation theory, like highest weight categories.

\end{abstract}

%---------------------------------------------------------------
%---------------------------------------------------------------

\section{Introduction}

Stability first arose as a geometric notion, within the context of Mumford's Geometric
Invariant Theory. Roughly speaking, if $G$ is an algebraic group acting on an algebraic
variety $X$, a point $x\in X$ is said to be stable with respect to the action of $G$ if
its orbit is closed. In the nineteen sixties, Mumford and others used this geometric notion
to construct moduli spaces of algebraic vector bundles over nonsingular algebraic curves,
translating it into a notion of stability for vector bundles, the so-called
Mumford-Takemoto stability. This very successful theory has been greatly expanded in the
past 40 years by several authors and today the construction of moduli spaces of sheaves
over algebraic varieties is well understood alongside several different variations.
It is also important to mention that moduli spaces of stable vector bundles have also been extremely useful in areas other than algebraic geometry, particularly in mathematical physics.

In 1994, King translated the geometric notion of stability into a concept of stability for finite dimensional modules over finite dimensional associative algebras \cite{K}, and
constructed moduli spaces of representations. This is especially useful in the study of
wild algebras, since the task of describing the structure of indecomposable representations
of such algebras is, from the purely algebraic point of view, hopeless. The best one can
try to achieve is to describe the geometry of the moduli spaces of (semi)stable
representations for a fixed dimension vector. A great deal was accomplished by King
\cite{K} and Schofield \cite{Sch} for the case of hereditary algebras.

In 1997, Rudakov introduced in \cite{Ru} a purely categorical notion of
stability for an object of an abelian category and observed that the Mumford-Takemoto stability for algebraic vector bundles over curves and King's stability for modules
were examples of his categorical notion. It was unclear however how Rudakov's categorical
stability is related to geometric stability in other abelian categories.

The goal of the present paper is to apply King's and Rudakov's
results to the realm of representation theory in the hope that these
techniques  will prove to be as useful as they were in other fields.

In Section \ref{s:catst}, we review Rudakov's categorical stability notion and use it to
construct a special type of stability structure on a wide range of abelian categories.
The central result (Theorem \ref{mthm}) states that if $\cal A$ is an abelian category
satisfying certain mild conditions, then one can introduce the concept of a moduli space
of (semi)stable objects which has the structure of a projective algebraic variety.

In Section \ref{s:appl} we explore several examples of relevant
abelian categories from representation theory to which we can
apply Theorem \ref{mthm}. The examples include highest weight
categories with finite poset of simple objects (blocks of the BGG
category $\cal O$ for instance), the category of Harish-Chandra
bimodules, and the category of bimodules of a finite-dimensional
Jordan algebra. We also give an example of a limit construction
which enables us to define a (scheme theoretic) moduli space
structure on a highest weight category whose underlying poset is
infinite (Example \ref{e:limit}).  However, none of the examples above
correspond to hereditary algebras, but rather, they correspond to
quasi-hereditary or stratified algebras. This fact provides strong
motivation to the study of the geometric structure of the moduli
spaces of (semi)stable representations for such algebras. Some interesting examples of
highest weight categories with finite poset coming from
finite-dimensional representation theory of current algebras have
just appeared in \cite{CJ}. We also
mention   paper \cite{GS} where it was shown  that the category of
bounded modules over the symplectic Lie algebra $sp(2n)$ is
equivalent to the category of weight modules for the $n$-th Weyl
algebra, whose blocks are just module categories of some quivers
with relations \cite{BBF}.
%---------------------------------------------------------------

\section{Categorical Stability Theory}\label{s:catst}

Let $\cal A$ be an abelian category and denote by $K_0(\cal A)$ its Grothendieck group.
We will write $V\in\cal A$ meaning $V\in{\rm Ob}(\cal A)$. The isomorphism class of $V$, as well as its image in $K_0(\cal A)$,
will be denoted by $[V]$.
Given $\gamma\in K_0(\cal A)$, set $\cal A_\gamma = \{V\in\cal A:[V]=\gamma\}$,
and let $\widetilde{\cal A}_\gamma$ be the additive closure of the full subcategory of $\cal A$ whose objects consists of all of the
subquotients of elements in $\cal A_\gamma$.
Clearly, $\widetilde{\cal A}_\gamma$ is a full abelian subcategory of $\cal A$.
Observe that if $\cal B$ is another abelian category and $\cal F:\cal B\to \cal A$ is an exact functor,
then $\cal F$ induces an abelian group homomorphism $K_0(\cal B)\to K_0(\cal A)$ also denoted by $\cal F$.

%%%%%%%%%%%%%%%%%%%%%%%%%

\subsection{Definition and Basic Properties}

The following definition was first proposed by Rudakov (see \cite{Ru} and
the more recent paper \cite{GKR}), probably inspired by \cite[Definition 1.1]{K}.

\begin{definition}\label{st}
A {\em stability structure} on $\cal A$ consists of a preorder $\preceq$ on the set of
objects of $\cal A$ satisfying the following properties:
\begin{itemize}
\item[(i)] trichotomy: for any two non-zero objects $A$ and $B$, either
$A\prec B$, or $B\prec A$, or $A\asymp B$;
\item[(ii)] seesaw property: for each short exact sequence $0 \to A \to B \to C \to 0$
of non-zero objects we have that:
$$ {\rm either}~~~ A\prec B \Longleftrightarrow
B\prec C \Longleftrightarrow A\prec C ~~,$$
$$ {\rm or}~~~ A\succ B \Longleftrightarrow
B\succ C \Longleftrightarrow A\succ C ~~,$$
$$ {\rm or}~~~ A\asymp B \Longleftrightarrow
B\asymp C \Longleftrightarrow A\asymp C ~~.$$
\end{itemize}
where:
\begin{itemize}
\item $A\asymp B$ if $A\preceq B$ and $B\preceq A$;
\item $A\prec B$ if $A\preceq B$ but not $A\asymp B$;
\item $A\succ B$ if $B\prec A$.
\end{itemize} \end{definition}

Bridgeland has introduced the notion of stability on triangulated categories \cite{Br}.
Stability on abelian categories is also discussed by Joyce in \cite[Definition 4.1]{J}.

\begin{definition}
A nonzero object $B$ is said to be {\em stable} if every non-trivial sub-object
$A\subset B$ satisfies $A\prec B$. Equivalently, $B$ is stable if every non-trivial
quotient-object $B\to C\to 0$ satisfies  $B\prec C$.
A nonzero object $B$ is said to be {\em semistable} if every non-trivial sub-object
$A\subset B$ satisfies $A\preceq B$. Equivalently, $B$ is semistable
if every non-trivial quotient-object $B\to C\to 0$ satisfies $B\preceq C$.
\end{definition}

It is not difficult to see that for any stability structure every simple object is stable, and every stable object is indecomposable. The converse is not true in general.
Semistable objects may be either indecomposable or decomposable; a decomposable object
$A=\oplus_j A_j$ is semistable iff each $A_i$ is semistable and $A_i \asymp A_j$ for all
$i,j$.

We also have Schur's Lemma for stable objects: if $A$ is a stable object and
${\rm Hom}(A,A)$ is a finite dimensional vector space over an algebraically closed field
$\mathbb{F}$, then ${\rm Hom}(A,A)=\mathbb{F}$ \cite[Theorem 1]{Ru}. Other general properties
of stability structures and of (semi)stable objects can be found at Rudakov's original paper \cite{Ru}.

Every abelian category can be given a trivial stability structure, in which any two objects
satisfy $A\asymp B$. For such structure, an object is stable if and only if it is simple,
while every non-simple object is semistable. It is unclear however whether every abelian category can be provided with a nontrivial stability structure. Below, we show how nontrivial
stability structures can be constructed on a large class of abelian categories, see Example
\ref{e:exslope} below.

%%%%%%%%%%%%%%%%%%%%%%%%%%%%%%%%%%%%%%%%%%%%%%%%%

\subsection{Constructing Stability Structures}\label{s:slope}

Let $R$ be a totally ordered $\mathbb R$-vector space  satisfying
\begin{equation}\label{e:order1}
a\in \mathbb R, a>0, \quad \gb r\in R, \gb r>\gb 0 \qquad\Rightarrow\qquad a\gb r>\gb 0 \text{ and } -\gb r<\gb 0.
\end{equation}
Let also $d: K_0(\cal A) \to \mathbb R$ be an additive function satisfying $d([A])>0$ if $[A]\ne 0$ ($A\in {\rm Obj}(\cal A)$), and $c: K_0(\cal A) \to R$ be a linear map.  We call the ratio
$$ \sigma(A) = \frac{c([A])}{d([A])}$$
the {\em $(c:d)$-slope} of the object $A$.
Given any two non-zero objects $A$ and $B$, set
$$ A \preceq B ~~ \Longleftrightarrow ~~ \sigma(A) \leq \sigma(B)$$
It follows that $A\asymp B ~\Leftrightarrow~ \sigma(A)=\sigma(B)$, and
$A\prec B ~\Leftrightarrow~ \sigma(A)<\sigma(B)$. Note that:
$$\sigma(A) - \sigma(B) = \frac{1}{d([A])d([B])}\det \left( \begin{array}{cc} d([B]) & c([B]) \\ d([A]) & c([A]) \end{array} \right),$$
where
$$\det \left( \begin{array}{cc} d([B]) & c([B]) \\ d([A]) & c([A]) \end{array} \right):= d([B])c([A]) - d([A])c([B]).$$
Thus $A\prec B$ if and only if the determinant above is smaller than $\gb 0$, and $A\asymp B$
if and only if the determinant above is the zero vector.

We have (cf. \cite[Lemma 3.2]{Ru}):

\begin{proposition}\label{slope}
The pre--order $\preceq$ defined above gives rise to a stability structure on $\cal A$.
\end{proposition}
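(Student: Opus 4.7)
The plan is to verify the two conditions of Definition \ref{st} directly from the formula for $\sigma$ and the additivity of $c$ and $d$ on short exact sequences. Trichotomy is nearly immediate: given two nonzero objects $A,B$, the values $\sigma(A),\sigma(B)\in R$ are comparable because $R$ is totally ordered, so exactly one of $\sigma(A)<\sigma(B)$, $\sigma(A)>\sigma(B)$, or $\sigma(A)=\sigma(B)$ holds, and by definition of $\preceq$ this translates into exactly one of $A\prec B$, $A\succ B$, or $A\asymp B$.

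For the seesaw property, let $0\to A\to B\to C\to 0$ be exact with $A,B,C$ nonzero. In $K_0(\cal A)$ we have $[B]=[A]+[C]$, so by additivity $d([B])=d([A])+d([C])$ and $c([B])=c([A])+c([C])$. The key step is the following computation, which already appears in the excerpt: substituting these into the determinant formula for $\sigma(B)-\sigma(A)$, a direct calculation gives
$$\sigma(B)-\sigma(A)=\frac{d([A])c([C])-d([C])c([A])}{d([A])d([B])},$$
and an identical computation produces
$$\sigma(C)-\sigma(B)=\frac{d([A])c([C])-d([C])c([A])}{d([B])d([C])},\qquad \sigma(C)-\sigma(A)=\frac{d([A])c([C])-d([C])c([A])}{d([A])d([C])}.$$
Thus the three differences share a common numerator $\gb N:=d([A])c([C])-d([C])c([A])\in R$, and their denominators are strictly positive real numbers since $d$ is positive on nonzero classes.

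It remains to conclude that the signs of these three differences in $R$ all agree with the sign of $\gb N$. This is exactly where the hypothesis (\ref{e:order1}) enters: multiplication of an element of $R$ by a positive real preserves the sign, and $1/(d([A])d([B]))$, $1/(d([B])d([C]))$, $1/(d([A])d([C]))$ are all positive reals. Hence either all three differences are $>\gb 0$, or all three are $<\gb 0$, or all three are $=\gb 0$, according as $\gb N>\gb 0$, $\gb N<\gb 0$, or $\gb N=\gb 0$. Translating back through the definition of $\preceq$, the first case gives $A\prec B$, $B\prec C$, $A\prec C$; the second gives the three $\succ$ relations; the third gives the three $\asymp$ relations. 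This matches the three alternatives of the seesaw property.

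The proof is almost entirely mechanical; the only subtle point is to keep track of the fact that the inequalities live in $R$ rather than $\R$, which is why the axiom (\ref{e:order1}) on compatibility of the order on $R$ with positive real scalars is indispensable. I expect no serious obstacle beyond checking the algebraic identity for the common numerator and invoking (\ref{e:order1}) to clear the positive real denominators.
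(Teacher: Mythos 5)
Your proof is correct and follows essentially the same route as the paper: both reduce the seesaw property to the observation that the three differences $\sigma(B)-\sigma(A)$, $\sigma(C)-\sigma(B)$, $\sigma(C)-\sigma(A)$ share the common ``determinant'' numerator $d([A])c([C])-d([C])c([A])$ after substituting $[B]=[A]+[C]$, and then clear the positive real denominators using the compatibility axiom (\ref{e:order1}). Your write-up is if anything slightly more explicit than the paper's about where (\ref{e:order1}) is invoked, but the argument is the same.
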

\begin{proof}
The first axiom of Definition \ref{st} is easy to verify. To check the
seesaw property, consider the exact sequence $0\to A\to B\to C\to 0$, so that
$[B]=[A]+[C]$. and,  hence, $c([B])=c([A])+c([C])$ and $d([B])=d([A])+d([C])$. It follows that:
\begin{align*}
\det &\left( \begin{array}{cc} d([B]) & c([B]) \\ d([A]) & c([A]) \end{array} \right)=
\det \left( \begin{array}{cc} d([A])+d([C]) & c([A])+c([C]) \\ d([A]) & c([A]) \end{array} \right)=\\
&=\det \left( \begin{array}{cc} d([C]) & c([C]) \\ d([A]) & c([A]) \end{array} \right)=
\det \left( \begin{array}{cc} d([C]) & c([C]) \\ d([A])+d([C]) & c([A])+c([C]) \end{array} \right)=\\
&=\det \left( \begin{array}{cc} d([C]) & c([C]) \\ d([B]) & c([B]) \end{array} \right).
\end{align*}
Therefore, $\sigma(A)=\sigma(B)\Leftrightarrow\sigma(A)=\sigma(C)\Leftrightarrow\sigma(B)=\sigma(C)$ and
$\sigma(A)<\sigma(B)\Leftrightarrow\sigma(A)<\sigma(C)\Leftrightarrow\sigma(B)<\sigma(C)$, which by definition imply the seesaw property.
\end{proof}

\begin{example}[Jordan-H\"older categories]\label{e:exslope}\rm
Let $\cal A$ be an abelian category all of whose objects are of finite length, $\Lambda$ be the set of isomorphism classes of simple objects, and $[A:\lambda]$ be the multiplicity of $\lambda\in\Lambda$ in the object $A$. Fix an order on $\Lambda$ for which there exists a minimal element and consider the $\mathbb R$--vector space $R$ with basis $\Lambda$ equipped with the lexicographic order, which obviously satisfy \eqref{e:order1}.
Given functions $g: \Lambda\to \mathbb R$ with $g(\lambda)>0$ for all $\lambda$, and $f:\Lambda\to \mathbb R$,  set
\begin{equation*}
c([A]) = \sum_{\lambda\in\Lambda} f(\lambda)[A:\lambda]\lambda \quad\text{and}\quad d([A]) =\sum_{\lambda\in\Lambda}g(\lambda)[A:\lambda].
\end{equation*}
Hence, every Jordan-H\"older category can be equipped with a stability structure.\hfill\qedsymbol
\end{example}

\begin{example}[Gieseker stability for torsion-free sheaves]\rm
Fix an $n$-dimensional projective variety $X$ over $\mathbb{C}$, and let ${\rm TF}(X)$
be the quasi-abelian category of torsion-free coherent sheaves on $X$. Given a coherent
sheaf $E$ on $X$, its {\em Hilbert polynomial} is defined as:
$$ p_E(t) = \chi(E(t)) = \sum_{p=1}^{n} (-1)^{p}\cdot \dim H^p(X,E(t)) ~~, $$
where as usual $E(t)=E\otimes\ox(t)$. This is a polynomial on $t$
with rational coefficients of degree at most $n$, so-called Hilbert
polynomial of $E$. It defines an additive function $p: K_0({\rm
TF}(X)) \to P_n(\mathbb{Q})$, where $P_n(\mathbb{Q})$ denotes the
$\mathbb{Q}$-vector space of polynomials of degree at most $n=\dim
X$. If $E$ is torsion-free, then $p_E(t)$ has degree exactly $n$
whenever $E$ is the nonzero sheaf; moreover, $p_E(t)=0$ if and only
if $E$ is the zero sheaf. We therefore can define an additive
function $r: K_0({\rm TF}(X)) \to \mathbb{Q}$, with $r(E)$ given the
leading coefficient of $p_E(t)$, which is always positive for every
nonzero sheaf. Providing $P_n(\mathbb{Q})$ with the lexicographic
order of the coefficients, we get that the {\em $(p:r)$-slope} slope
function yields a stability condition on ${\rm TF}(X)$, which is
known as the Gieseker stability. \hfill\qedsymbol
\end{example}

%%%%%%%%%%%%%%%%%%%%%%%%%%%%%%%%%%%%%%%%%

\subsection{Harder-Narasimhan Filtrations and Moduli Sets}\label{s:hn}

One of the main results for stability structures on abelian categories is the existence of a
{\em Harder-Narasimhan filtration} for any object of $\cal A$:

\begin{theorem} {\rm \cite[Theorems 2 and 3]{Ru}}\label{HN}
Let $\cal A$ is a noetherian abelian category provided with a stability condition $\prec$.
If $A$ is a semistable object, then there exists a unique filtration
$$ A = F^{n+1}A \hookleftarrow F^nA \hookleftarrow \cdots \hookleftarrow F^1A
\hookleftarrow F^0A = 0 ~~, $$
so-called Harder-Narasimhan filtration, such that:
\begin{itemize}
\item[(i)] the factors $Q_k=F^{k-1}A/F^kA$ are stable;
\item[(ii)] $Q_1\asymp Q_2\asymp \cdots \asymp Q_n$.
\end{itemize} 
\end{theorem}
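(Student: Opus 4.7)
The plan is to view this as a Jordan--H\"older theorem inside the full subcategory $\mathcal{A}_\sigma\subset\mathcal{A}$ whose nonzero objects are the semistable ones of slope $\sigma:=\sigma(A)$. First I would verify that $\mathcal{A}_\sigma$ is an abelian subcategory in which the simple objects are exactly the slope-$\sigma$ stable objects. For a morphism $f:X\to Y$ in $\mathcal{A}_\sigma$, the image $\mathrm{im}(f)$ is simultaneously a quotient of $X$ and a subobject of $Y$, so semistability forces $\sigma(\mathrm{im}(f))=\sigma$; the seesaw property then propagates slope equality, and hence semistability, to $\ker(f)$ and $\mathrm{coker}(f)$. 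Thus the stability structure on $\mathcal{A}$ restricts, and ``stable of slope $\sigma$'' coincides with ``simple in $\mathcal{A}_\sigma$''.

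Next I would construct the filtration by iteratively carving off stable quotients. The key lemma is that every nonzero $X\in\mathcal{A}_\sigma$ admits a stable quotient: let $\mathcal{S}$ be the family of proper subobjects $Y\subsetneq X$ with $X/Y\in\mathcal{A}_\sigma$. This family contains $Y=0$, and any chain in $\mathcal{S}$ is ascending in $X$, so the noetherian hypothesis supplies a maximal element $Y^{\ast}$. Maximality forces $X/Y^{\ast}$ to have no proper nonzero subobject inside $\mathcal{A}_\sigma$, i.e.\ to be stable. Applying this to $X=A$ and setting $F^{n}A=Y^{\ast}$, $Q_{n+1}=A/F^{n}A$ gives the outermost step, and iterating on $F^{n}A$ (which still lies in $\mathcal{A}_\sigma$ by the first paragraph) produces the whole filtration. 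Termination is automatic, since each iteration strictly enlarges an ascending chain of subobjects of $A$, which must stabilize by the noetherian assumption.

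For uniqueness I would invoke the Schur-type statement \cite[Theorem 1]{Ru}: any nonzero morphism between stable objects of the same slope is an isomorphism. This reduces the comparison of two such filtrations to the classical Jordan--H\"older refinement argument inside $\mathcal{A}_\sigma$: one pulls back the finest common refinement and matches composition factors by Schur. The clause $Q_{1}\asymp\cdots\asymp Q_{n}$ is automatic from the construction since each $Q_{k}$ has slope $\sigma$.

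The main obstacle is that stability is intrinsically a minimality (DCC-flavoured) condition, whereas only ACC on subobjects is available. The decisive trick is to extract stable \emph{quotients} rather than stable subobjects: the minimality required of $X/Y^{\ast}$ is encoded as maximality of $Y^{\ast}$ in a family of subobjects of $X$, which is precisely where the noetherian hypothesis bites. A secondary technical point is verifying that $\mathcal{S}$ really does afford a maximal element---i.e.\ that the union of an ascending chain in $\mathcal{S}$ still lies in $\mathcal{S}$---which follows from compatibility of slope with colimits of chains (itself a consequence of seesaw applied along the chain).
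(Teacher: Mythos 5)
The paper offers no proof of this statement---it is quoted directly from \cite[Theorems 2 and 3]{Ru}---so your argument must stand on its own, and it has one genuine gap: the termination step. Your construction peels off stable \emph{quotients}, producing the chain $A \supsetneq F^{n}A \supsetneq F^{n-1}A \supsetneq \cdots$ of successive kernels; this chain is \emph{descending}, not ascending as you assert, and the noetherian hypothesis (ACC on subobjects) says nothing about descending chains. The gap is not cosmetic. Take $\cal A$ to be finitely generated $\Z$-modules with the trivial stability structure ($V\asymp W$ for all $V,W$), which the paper explicitly allows: this category is noetherian, stable means simple, $\Z$ is semistable, yet $\Z\supset 2\Z\supset 4\Z\supset\cdots$ shows your process never terminates, and indeed no finite filtration of $\Z$ with stable factors exists. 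So the statement is false under the bare noetherian hypothesis; Rudakov's actual theorems carry an additional ``weakly artinian'' descending-chain condition, and your proof must either assume that or assume the relevant subcategory $\widetilde{\cal A}_{[A]}$ has finite length (which is what holds in all of the paper's applications), at which point termination is immediate because the lengths of the $F^kA$ strictly decrease.

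The rest of your outline is sound. The reduction to the full subcategory $\cal A_\sigma$ of semistables in a fixed $\asymp$-class, with the stables as its simple objects, is correct, and it works verbatim for an abstract stability structure if you replace ``slope $\sigma$'' by ``$\asymp A$'' throughout --- which you should, since the theorem is not restricted to slope stabilities. Extracting a stable quotient as $X/Y^{*}$ for $Y^{*}$ maximal in $\{Y\subsetneq X : X/Y\in\cal A_\sigma\}$ is exactly the right way to convert the minimality inherent in stability into a use of ACC; and your closing worry about unions along chains is unnecessary, since in a noetherian object every nonempty family of subobjects already has a maximal element. Finally, note that the filtration itself is \emph{not} unique (for $A=S_1\oplus S_2$ with $S_1\not\cong S_2$ stable and $S_1\asymp S_2$ there are two distinct ones); what your Schur-plus-Jordan--H\"older argument correctly delivers, and what is actually true and is all that is needed for S-equivalence to be well defined, is uniqueness of the multiset of factors.
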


\begin{definition}
Two semistable objects in ${\cal A}_\gamma$ are said to be S-equivalent if their
Harder-Narasimhan filtrations have the same composition factors.
\end{definition}

It is easy to see that S-equivalence is indeed an equivalence relation, and that two
stable objects are S-equivalent if and only if they are isomorphic.

\begin{definition}
Let  ${\cal A}_\gamma^s$  be the collection of all semistable objects within ${\cal A}_\gamma$.
The moduli set of semistable objects of $\cal A$ represented by the class $\gamma$
up to S-equivalence is given by ${\cal C}{\cal A}_\gamma^s={\cal A}_\gamma^s/\sim$,
where $\sim$ denotes S-equivalence.
\end{definition}

If the stability structure comes from a slope function $\sigma$ as above, we shall
write ${\cal A}_\gamma^\sigma$ and ${\cal C}{\cal A}_\gamma^\sigma$ instead of
${\cal A}_\gamma^s$ and ${\cal C}{\cal A}_\gamma^s$.

Under some circumstances, the moduli set ${\cal C}{\cal A}_\gamma^s$
has the structure of an algebraic variety. To see this, we will need
the concept of pull-back for a stability structure. Let $\cal A$ be
an abelian category provided with a stability structure $\preceq$.
Let $\cal B$ be an abelian category, and consider an exact functor
$\cal F:\cal B\to\cal A$. We can then pull-back the stability
structure in $\cal A$ to one in $\cal B$ in the obvious way: given
$U,W\in\cal B$ we declare $U\prec W$ if and only if ${\cal
F}(U)\prec {\cal F}(W)$. The following proposition is easily
verified.

\begin{proposition}\label{functor}
Suppose $\cal A, \cal B$, and $\cal F$ are as above, $\beta \in K_0(\cal B)$, and suppose
that $\cal F|_{\widetilde{\cal B}_\beta}: \widetilde{\cal B}_\beta\to
\widetilde{\cal A}_{\cal F(\beta)}$ is an equivalence of categories. Then $\cal F$ induces
a bijection $\overline{\cal F}:\cal C\cal B_\beta^s\to \cal C\cal A_{\cal F(\beta)}^s$.
\end{proposition}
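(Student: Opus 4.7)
My plan is to prove the proposition in three stages, exploiting the equivalence $\cal F|_{\widetilde{\cal B}_\beta}$ at each step.

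First, I will check that $\cal F$ both preserves and reflects (semi)stability. Since $\widetilde{\cal B}_\beta$ is a full abelian subcategory of $\cal B$, every subobject in $\cal B$ of any $U\in\cal B_\beta$ already lies in $\widetilde{\cal B}_\beta$, and the analogous statement holds for subobjects of $\cal F(U)\in\cal A_{\cal F(\beta)}$. Any equivalence of abelian categories induces an order isomorphism of subobject lattices, so $\cal F$ puts the subobjects of $U$ in bijection with those of $\cal F(U)$; combined with the pull-back definition of $\preceq$ on $\cal B$, this forces $U$ to be (semi)stable in $\cal B$ exactly when $\cal F(U)$ is (semi)stable in $\cal A$.

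Second, I will set $\overline{\cal F}([U])=[\cal F(U)]$ and verify well-definedness on S-equivalence classes via Theorem \ref{HN}. Applying the exact functor $\cal F$ to the Harder--Narasimhan filtration of a semistable $U$ produces a filtration of $\cal F(U)$ whose successive quotients are the $\cal F$-images of the HN factors of $U$; these are stable by Step 1 and pairwise $\asymp$-equivalent by the pull-back, so the uniqueness clause of Theorem \ref{HN} identifies this with the HN filtration of $\cal F(U)$. Hence S-equivalent objects are sent to S-equivalent objects. Injectivity of $\overline{\cal F}$ then follows from full faithfulness: any matching by isomorphisms of the HN factors of $\cal F(U)$ with those of $\cal F(U')$ lifts uniquely to a matching of the HN factors of $U$ and $U'$. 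For surjectivity, any semistable $W\in\cal A_{\cal F(\beta)}$ is isomorphic to $\cal F(U)$ for some $U\in\widetilde{\cal B}_\beta$ by essential surjectivity, and Step 1 makes this $U$ semistable.

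The step I expect to be the main obstacle is verifying, inside the surjectivity argument, that the $U$ furnished by essential surjectivity genuinely satisfies $[U]=\beta$ in $K_0(\cal B)$ and not merely $\cal F([U])=\cal F(\beta)$ in $K_0(\cal A)$; a priori $[U]-\beta$ could lie in $\ker(\cal F_*)$. My plan is to lift the comparison to the induced $K_0$-isomorphism $K_0(\widetilde{\cal B}_\beta)\simto K_0(\widetilde{\cal A}_{\cal F(\beta)})$: after fixing some $B_0\in\cal B_\beta$, establishing $[\cal F(B_0)]=[W]$ in $K_0(\widetilde{\cal A}_{\cal F(\beta)})$ transfers through the equivalence to $[B_0]=[U]$ in $K_0(\widetilde{\cal B}_\beta)$, which pushes forward to $[U]=\beta$ in $K_0(\cal B)$. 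This refined equality on $\widetilde{\cal A}_{\cal F(\beta)}$ is where real care is needed; in the intended applications it is automatic because the natural map $K_0(\widetilde{\cal A}_{\cal F(\beta)})\to K_0(\cal A)$ is injective on the classes that arise.
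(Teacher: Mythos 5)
The paper offers no proof of Proposition \ref{functor} beyond the remark that it is ``easily verified,'' and your argument is exactly the intended verification: the equivalence identifies subobject lattices so that (semi)stability is preserved and reflected under the pulled-back preorder, exactness transports the filtrations of Theorem \ref{HN} together with their stable, mutually $\asymp$-equivalent factors, and full faithfulness plus essential surjectivity yield injectivity and surjectivity on S-equivalence classes. The one point you flag in the surjectivity step --- that the $U$ with $\mathcal{F}(U)\cong W$ furnished by essential surjectivity satisfies $[U]=\beta$ in $K_0(\mathcal{B})$ rather than merely $\mathcal{F}([U])=\mathcal{F}(\beta)$ in $K_0(\mathcal{A})$ --- is a real subtlety at the stated level of generality, and your proposed repair does hinge on the natural map $K_0(\widetilde{\mathcal{A}}_{\mathcal{F}(\beta)})\to K_0(\mathcal{A})$ being injective on the relevant classes. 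In every use the paper makes of the proposition (Theorem \ref{mthm} and the examples of Section \ref{s:appl}) the categories are of finite length, $K_0$ is free on the simple objects, and $\widetilde{\mathcal{A}}_{\mathcal{F}(\beta)}$ is closed under subquotients, so that injectivity holds and your argument closes completely; since the paper supplies no proof at all, your explicit isolation of this hypothesis is a refinement of the source rather than a deviation from it.
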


%%%%%%%%%%%%%%%%%%%%%%%%%%%%%%%%%%%%%%%%%%%%%%%%%

\subsection{The Main Theorem}

We begin recalling the rephrasing of the notion of slope stability in terms of {\em character} stability.
Given a totally ordered $\mathbb R$-vector space $R$ satisfying \eqref{e:order1},
a $(c:d)$-slope $\sigma:K_0(\cal A)\to R$, and $\gamma\in K_0(\cal A)$, let
$\theta: K_0(\cal A)\to R$ be defined by
\begin{equation}\label{e:char}
\theta([V]) = -c([V]) +\sigma(\gamma)d([V]).
\end{equation}
Observe that if $V\in \cal A_{\gamma}$, then $\theta([V]) = 0$ and $V$ is stable
(resp. semistable) iff $\theta([U])>0$ (resp. $\theta([U])\ge 0$) for all
sub--object $U$ of $V$. Conversely, given abelian group homomorphisms
$\theta: K_0(\cal A)\to  R$ and $d: K_0(\cal A)\to  \mathbb R$, $\gb r\in R$,
and $\gamma\in K_0(\cal A)$ such that $\theta(\gamma)=0$ and $d(\gamma)\ne 0$, set
$$c([V]) = \theta([V]) + \frac{d[V]}{d(\gamma)}\ \gb r,$$
and let $\sigma$ be the corresponding slope. Then we see that, if $V\in\cal A_{\gamma}$, then
$V$ is stable (resp. semistable) iff $\theta([U])>0$ (resp. $\theta([U])\ge 0$) for all sub--object $U$ of $V$. Hence, if we restrict ourselves to $\cal A_{\gamma}$, for some $\gamma\in K_0(\cal A)$, slope stability can be defined by the choice of a character, i.e., an abelian group homomorphism $\theta: K_0(\cal A)\to  R$ satisfying $\theta(\gamma) = 0$.

From now on we will assume that $R$ is a totally ordered $\R$-vector space satisfying condition \eqref{e:order1}
and also that $R$ is equipped with a norm $\|\cdot\|$ such that
\begin{equation}\label{e:order2}
\gb r'<\gb r \Rightarrow\ \exists\ \varepsilon>0\quad\text{such that}\quad \gb r''<\gb r \quad\text{whenever}\quad |\gb r''-\gb r'\|\le \varepsilon
\end{equation}
We say that an order satisfying conditions (\ref{e:order1}) and (\ref{e:order2}) is {\em continuous} or that $R$ is a continuously ordered $\mathbb R$-vector space.

\begin{lemma}\label{l:vec<>scalar}
Let $R$ be a continuously ordered $\mathbb R$-vector space, $\cal A$ be an abelian category, and $\gamma\in K_0(\cal A)$ be such that $\widetilde{\cal A}_\gamma$ is a Jordan-H\"older category. Then for any $(c:d)$-slope function $\sigma:K_0(\cal A)\to R$,
there exists a character $\theta:K_0(\cal A)\to\R$ such that:
\begin{enumerate}
\item $\theta(\gamma)=0$ and $\theta([V])\in\Z$ for every simple object $V\in\widetilde{\cal A}_\gamma$.
\item An object $V\in\cal A_\gamma$ is $\theta$-semistable (resp. $\theta$--stable) iff it is $\sigma$-semistable (resp. $\sigma$--stable).
\end{enumerate}
\end{lemma}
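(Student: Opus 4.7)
The plan is to reduce the problem to a finite-dimensional convex-geometric separation argument, solve it over $\R$ via Hahn--Banach, and then rationalise to obtain integer values. Since $\widetilde{\cal A}_\gamma$ is Jordan--H\"older and consists of subquotients of objects of $\cal A_\gamma$, its simple objects are exactly the (finitely many) composition factors $L_1,\dots,L_s$ of $\gamma$; write $m_i=[V:L_i]$ for any $V\in\cal A_\gamma$. Every subobject $U\subset V$ satisfies $0\le [U:L_i]\le m_i$, so the classes $u_j=\sum_i n_{j,i}[L_i]\in K_0(\widetilde{\cal A}_\gamma)$ realised by such subobjects form a \emph{finite} list, $j=1,\dots,N$. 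Set $\theta_R:=-c+\sigma(\gamma)d:K_0(\cal A)\to R$; by the paragraph preceding the lemma, $\theta_R(\gamma)=\gb 0$ and $V\in\cal A_\gamma$ is $\sigma$-stable (resp.\ semistable) iff $\theta_R([U])>\gb 0$ (resp.\ $\ge\gb 0$) for every proper nonzero subobject $U$. Hence the lemma reduces to finding integers $a_1,\dots,a_s$, with $\sum_i m_i a_i=0$, such that for every $j$ the real number $\sum_i n_{j,i}a_i$ has the same sign as $\theta_R(u_j)\in R$; once these are in hand, $\theta([L_i]):=a_i$ extends (using injectivity of $\R$ as a $\Z$-module) to a character $\theta:K_0(\cal A)\to\R$ with the required properties.

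Let $R_0\subset R$ denote the finite-dimensional $\R$-span of $\theta_R([L_1]),\dots,\theta_R([L_s])$. Axiom \eqref{e:order1} and additivity of the order make the positive cone $R_0^+=\{r\in R_0:r>\gb 0\}$ a convex subset of $R_0$ not containing $\gb 0$, while the continuity axiom \eqref{e:order2}, applied at $r=\gb 0$ and combined with the symmetry $r>\gb 0\Leftrightarrow -r<\gb 0$, shows that $R_0^+$ is \emph{open} in the Euclidean topology on $R_0$ (the degenerate case $R_0=0$ being trivial). Geometric Hahn--Banach then produces an $\R$-linear functional $\phi:R_0\to\R$ with $\phi(r)>0$ iff $r>\gb 0$, and consequently $\phi(r)<0$ iff $r<\gb 0$. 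Setting $a_i^{(0)}:=\phi(\theta_R([L_i]))$ yields a real solution to the sign-tracking problem: $\sum_i n_{j,i}a_i^{(0)}=\phi(\theta_R(u_j))$ has the correct sign for every $j$, and in particular $\sum_i m_i a_i^{(0)}=\phi(\theta_R(\gamma))=\phi(\gb 0)=0$.

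To pass from real to integer values, let $J_0=\{j:\theta_R(u_j)=\gb 0\}$ (which includes the index for $\gamma$) and let $L\subset\R^s$ be the linear subspace cut out by the integer equations $\sum_i n_{j,i}a_i=0$ for $j\in J_0$; then $L$ is $\mathbb{Q}$-defined, so $L\cap\mathbb{Q}^s$ is dense in $L$. The strict sign conditions for $j\notin J_0$ are finitely many open linear inequalities that cut out an open subset of $L$, nonempty because it contains $(a_i^{(0)})$. Choosing a rational point there and clearing denominators produces the required $(a_1,\dots,a_s)\in\Z^s$. The technically delicate point is the Hahn--Banach step: axiom \eqref{e:order2} must first be upgraded from its one-sided formulation to the honest topological fact that $R_0^+$ is both convex and open in the Euclidean topology on $R_0$, after which the remainder is routine linear/convex geometry over $\mathbb{Q}$.
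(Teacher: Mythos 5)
Your proof is correct, and it rests on the same ingredients as the paper's — finite-dimensionality of the image of the vector-valued character $-c+\sigma(\gamma)d$, finiteness of the set of classes in $K_0$ realised by subobjects of objects of $\cal A_\gamma$, and the continuity axiom \eqref{e:order2} — but it organises the reduction in the opposite order. The paper first \emph{rationalises} the vector-valued character: it expresses the values $\tilde\theta(\gamma_i)$ on the simples as $\mathbb Q$-linear combinations of a $\mathbb Q$-linearly independent subset of those values, perturbs that subset to points of $\mathbb Q^k$ (continuity of the order plus finiteness of the relevant classes preserves the strict inequalities, while $\mathbb Q$-linear independence preserves the vanishing $\theta'(\gamma)=0$), and only then scalarises by pairing with a suitably chosen rational vector $\bar\theta$. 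You first \emph{scalarise}, via a geometric Hahn--Banach separation of the open convex positive cone of the finite-dimensional span $R_0$ from the origin — which is a cleaner and more explicit justification of the step the paper compresses into ``as before we can choose $\bar\theta$'' — and then rationalise inside the $\mathbb Q$-defined subspace cut out by the equality constraints. Your remark that \eqref{e:order1} and \eqref{e:order2} make $R_0^+$ convex and open is precisely the point that makes either version airtight, and you are right to flag it. One small caveat: your final extension of $[L_i]\mapsto a_i$ to all of $K_0(\cal A)$ by divisibility of $\R$ presupposes that this assignment is well defined on the subgroup generated by the images of the $[L_i]$, which could fail if those images satisfy extra relations in $K_0(\cal A)$; the paper sidesteps this by assuming from the outset that $\cal A=\widetilde{\cal A}_\gamma$ (where $K_0$ is free on the simples), and since only classes of subquotients of objects of $\cal A_\gamma$ ever enter the stability conditions, you should make the same harmless reduction.
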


\begin{proof}
We can assume, without loss of generality, that $\cal A=\widetilde{\cal A}_\gamma$. Let $\tilde\theta$ be the character defined by \eqref{e:char}.
Since $\tilde\theta(K_0(\cal A))$ is a finite--dimensional subspace of $R$, we can also suppose that $R = \mathbb R^k$ equipped with the usual inner product $\langle,\rangle$. It suffices to obtain $\theta$ such that $\theta(V)\in \mathbb Q$ for all simple $V\in\cal A$.

Let $I=\{1,\cdots,n\}$ be the index set of isomorphism classes of simple objects, $\gamma_i, i\in I$, be the corresponding images in $K_0(\cal A)$,  and define $\tilde\theta_i: K_0(\cal A)\to R$ by setting $\tilde\theta_i(\gamma_j) = \delta_{ij}\tilde\theta(\gamma_i)$, so that
$\tilde\theta =\sum_i \theta_i$. Choose a basis $\{\tilde\psi_1,\cdots,\tilde\psi_m\}\subseteq \{\tilde\theta_1, \cdots, \tilde\theta_n\}$ for the $\mathbb Q$-vector space generated by  $\{\tilde\theta_1, \cdots, \tilde\theta_n\}$. Thus, we have $\tilde\theta_i = \sum_j a_{ij}\tilde\psi_j$ for some $a_{ij}\in\mathbb Q$. For each choice of $\psi_1,\cdots,\psi_m\in \mathbb Q^k\subseteq R$, define
$\theta'(\gamma_i) = \sum_j a_{ij}\psi_j$ and extend by linearity. The linear independence of $\tilde\psi_j$ immediately implies $\theta'(\gamma)=0$. Since $\theta'$ depends continuously on the choice of $\psi_j$ and there are only finitely many $\beta\in K_0(\cal A)$ which can be the class of a sub-object of objects in $\cal A_\gamma$, it follows that we can choose $\tilde\psi_j$ so that an object $V\in\cal A_\gamma$ is $\theta'$--semistable (resp. $\theta'$--stable) iff it is $\tilde\theta$--semistable (resp. $\tilde\theta$--stable).
Finally, for each choice of $\bar\theta\in \mathbb Q^k$, set $\theta(\beta) = \langle\theta'(\beta),\bar\theta\rangle$. As before we can choose $\bar\theta$ so that we have {\it 2}.
\end{proof}

Now let $B$ be a finite dimensional associative algebra over an algebraically closed field
$\mathbb{F}$ of characteristic zero, and let ${\cal B}=$mod-$B$, the category of finite dimensional (left) $B$-modules. Given a class $\beta\in K_0(\cal B)$ and a group homomorphism $\theta:K_0(\cal B)\to\R$ with $\theta(\beta)=0$, it was shown by King in \cite{K} that the categorical moduli set ${\cal C}{\cal B}_\beta^\theta$ (i.e. the set of $\theta$-semistable objects up to S-equivalence) has the structure of a projective variety over $\mathbb{F}$. In light of this key example, we can conclude that:

\begin{theorem}\label{mthm}
Let $\cal A$ be an abelian category, $R$ be a continuously ordered $\mathbb R$-vector space,
$\cal B =$ {\rm mod}-$B$ for some finite-dimensional algebra $B$, and $\gamma\in K_0(\cal A)$.
Suppose $\gamma$ is such that there exists an exact functor ${\cal F}:\cal B \to{\cal A}$ such that $\gamma={\cal F}(\beta)$ for some $\beta\in K_0(\cal \beta)$ and
$\cal F|_{\widetilde{\cal B}_\beta}: {\widetilde{\cal B}_\beta}\to {\widetilde{\cal A}_\gamma}$ is an equivalence of categories.
Then, for any slope function $\sigma:K_0(\widetilde{\cal A}_\gamma)\to R$, the moduli set ${\cal C}{\cal A}_\gamma^\sigma$ can be given a structure of projective $\mathbb{F}$-variety.
\end{theorem}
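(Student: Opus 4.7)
The plan is to reduce the theorem to King's projective-variety construction for finite-dimensional algebras by transporting the $R$-valued slope stability on $\widetilde{\cal A}_\gamma$ across the equivalence ${\cal F}$ to $\widetilde{\cal B}_\beta$, and then converting it into a scalar (integer-valued) character on $K_0({\cal B})$. Concretely, I first pull $\sigma:K_0(\widetilde{\cal A}_\gamma)\to R$ back via ${\cal F}|_{\widetilde{\cal B}_\beta}$, obtaining $\tilde\sigma := \sigma\circ {\cal F}:K_0(\widetilde{\cal B}_\beta)\to R$. Since $B$ is finite-dimensional, every object of ${\cal B}={\rm mod}$-$B$ has finite length, so $\widetilde{\cal B}_\beta$ is a Jordan-H\"older category and the hypotheses of Lemma \ref{l:vec<>scalar} are satisfied for $(\widetilde{\cal B}_\beta,\beta,\tilde\sigma)$.

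Applying that lemma yields a character $\theta:K_0(\widetilde{\cal B}_\beta)\to\R$, integer-valued on simples and with $\theta(\beta)=0$, such that $\theta$-(semi)stability of objects in ${\cal B}_\beta$ coincides with $\tilde\sigma$-(semi)stability. I extend $\theta$ to all of $K_0({\cal B})$ by setting it to zero on simples not lying in $\widetilde{\cal B}_\beta$; this is harmless because every sub-object of an element of ${\cal B}_\beta$ already lies in $\widetilde{\cal B}_\beta$, so the extension does not alter the $\theta$-(semi)stability test on ${\cal B}_\beta$. Since $K_0({\cal B})$ is generated by classes of simple $B$-modules, $\theta$ is then integer-valued everywhere, placing us squarely in King's setup.

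King's theorem \cite{K} then gives ${\cal C}{\cal B}_\beta^\theta$ the structure of a projective $\mathbb{F}$-variety. Applying Proposition \ref{functor} to the equivalence ${\cal F}|_{\widetilde{\cal B}_\beta}:\widetilde{\cal B}_\beta\to\widetilde{\cal A}_\gamma$, with the stability on ${\cal B}$ taken to be the pull-back of the slope on ${\cal A}$, produces a bijection $\overline{\cal F}:{\cal C}{\cal B}_\beta^{\tilde\sigma}\to {\cal C}{\cal A}_\gamma^\sigma$. Combined with the identification ${\cal C}{\cal B}_\beta^\theta={\cal C}{\cal B}_\beta^{\tilde\sigma}$ obtained above, this transports the projective variety structure onto ${\cal C}{\cal A}_\gamma^\sigma$.

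The principal obstacle is the reduction from an $R$-valued slope to an integer-valued scalar character, which is exactly the content of Lemma \ref{l:vec<>scalar}; the continuity axiom \eqref{e:order2} on $R$ is used there to guarantee that a small rational perturbation of the character does not change (semi)stability on the finitely many classes of sub-objects that can arise in ${\cal B}_\beta$. The remaining verifications---that sub-objects of elements of ${\cal B}_\beta$ lie in $\widetilde{\cal B}_\beta$, that ${\cal F}$ preserves sub-objects and short exact sequences, and hence that ${\cal F}|_{\widetilde{\cal B}_\beta}$ preserves Harder-Narasimhan filtrations and $S$-equivalence classes---are formal consequences of ${\cal F}|_{\widetilde{\cal B}_\beta}$ being an exact equivalence of abelian categories.
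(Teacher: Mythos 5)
Your proposal is correct and follows essentially the same route as the paper: reduce to King's construction via Lemma \ref{l:vec<>scalar} and transport the resulting projective structure through the equivalence using Proposition \ref{functor}. The only (harmless) difference is bookkeeping --- you pull the slope back to $\widetilde{\cal B}_\beta$ before applying the lemma, whereas the paper applies the lemma on $\widetilde{\cal A}_\gamma$ and then regards the character as living on ${\cal B}$ via the induced isomorphism of Grothendieck groups; your version also spells out the extension of $\theta$ to all of $K_0({\cal B})$, a detail the paper leaves implicit.
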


\begin{proof}
By Lemma \ref{l:vec<>scalar} we can find a character
$\theta:K_0(\cal A)\to\mathbb R$ preserving (semi)stable objects in
$\cal A_\gamma$. Since $\cal F:K_0(\widetilde{\cal B}_\beta)\to
K_0(\widetilde{\cal A}_\gamma)$ is an isomorphism, we can regard
$\theta$ as a character on $\cal B$. From \cite[Proposition 4.3]{K}
we know that ${\cal C}{\cal B}_\beta^s$, where the stability
structure on $\cal B$ is the one pulled-back from $\cal A$, is a
projective variety. Hence, by Proposition \ref{functor}, this
structure can be transported to ${\cal C}{\cal A}_\gamma^\sigma$.
\end{proof}

For instance, if $\cal A$ is a noetherian abelian category with finitely many
nonisomorphic simple objects $V_1,\dots,V_n$ for which there are projective
covers $P_i\to V_i\to 0$, then $B={\rm End}(\oplus P_i)$ is a finite dimensional
associative algebra and $\cal A$ is equivalent to mod-$B$. In this context, Theorem
\ref{mthm} applies, and we conclude that ${\cal C}{\cal A}_\gamma^\sigma$ has the structure
of a projective variety.

%---------------------------------------------------------------

\section{Applications}\label{s:appl}

If $B$ in Theorem \ref{mthm} is a hereditary algebra, then more can be said about
${\cal C}{\cal A}_\gamma^\sigma$: it is irreducible, normal, and has dimension
equal to $\dim\left({\rm Ext}_\cal A^1(V,V)\right)$, for $V$ being a generic object
represented by $\gamma$; the subset of isomorphism classes of stable objects is open
(in the Zariski topology) and nonsingular \cite{K}. Furthermore, the birational type
of these varieties is studied in \cite{Sch}.

However, many natural categories that arise in
representation theory are equivalent to module categories of finite-dimensional
algebras which are not hereditary, but say quasi-hereditary or standardly stratified.
In this subsection we present some examples of such categories.
It would be interesting to work out more detailed algebraic geometric
properties of ${\cal C}{\cal A}_\gamma^\sigma$ for them.

%%%%%%%%%%%%%%%%%%%%%%%%%%%%%%%%%%%%%%%%%%%%%%%%%

\subsection{Highest Weight Categories and Quasi-Hereditary Algebras}

Let $\mathcal A$ be an $\mathbb F$-linear category. Following \cite{CPS} we
say that $\mathcal A$ is locally artinian if it is closed under direct limit (union) and  every  object is a union of objects of finite length.
We also assume that $\mathcal A$ has enough injective modules and that $V\cap \left(\cup_i U_i\right)=\cup_i (V\cap U_i)$
for any collection of objects $V, \{U_i\}$. An object $W$ is said to be a composition
factor of an object $V$ if it is a composition factor of a finite-length subobject of $V$. In this case we denote by
$[V:W]$ the supremum of the multiplicities of $W$ in all such subobjects.

A locally artinian category $\mathcal A$ is called a
\emph{highest weight category} \cite{CPS} if there exists an
interval-finite poset $\Lambda$ (i.e. the sets $[\lambda,
\mu]=\{z:\lambda\leq z\leq \mu\}$ are finite) such that:
\begin{enumerate}
\item Non isomorphic  simple objects $\{L(\lambda), \lambda\in
\Lambda\}$ in $\mathcal A$ are parameterized by $\Lambda$.
\item For each $\lambda\in \Lambda$ there exists $\Delta(\lambda)\in\cal A$ and a monomorphism $L(\lambda)\hookrightarrow \Delta(\lambda)$ such
that any composition factor $L(\mu)$ of $\Delta(\lambda)/L(\lambda)$ satisfies $\mu<\lambda$.
\item For every $\lambda,\mu\in\Lambda$,  $\dim \left({\rm Hom}(\Delta(\lambda), \Delta(\mu))\right)$ and $[\Delta(\lambda):L(\mu)]$ are finite.
\item The injective envelope $I(\lambda)$ of $L(\lambda)$ has a filtration $0=N_0\subset N_1\subset \ldots$ such that
$I(\lambda)=\cup_i N_i$, $N_1\simeq \Delta(\lambda)$,
$N_n/N_{n-1}\cong L(\mu)$ for some $\mu=\mu(n)>\lambda$ and given $\mu\in \Lambda$ there exist only finitely many $n$ for which
$\mu=\mu(n)$.
\end{enumerate}
The elements of $\Lambda$ are called weights and condition $2$ above explains the terminology ``highest weight category''. Note that $L(\lambda)$ is the socle of $\Delta(\lambda)$. Let
${\mathcal A}_f$ be the full (artinian) subcategory in $\mathcal
A$ consisting of objects of finite length.

\begin{theorem}(\cite{CPS}, Theorem 3.5)\label{cpshwfda}
If the poset $\Lambda$ is finite then the category ${\mathcal
A}_f$ is equivalent to a module category for some
finite-dimensional algebra.
\end{theorem}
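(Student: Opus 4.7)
The plan is to produce a finite-length injective cogenerator $I$ of $\mathcal{A}_f$, form the finite-dimensional $\mathbb{F}$-algebra $B := \mathrm{End}_{\mathcal{A}}(I)$, and exhibit a contravariant equivalence $\mathrm{Hom}_{\mathcal{A}}(-, I) : \mathcal{A}_f \to B\text{-mod}_{\mathrm{fd}}$ which, composed with $\mathbb{F}$-linear duality, yields the desired covariant equivalence $\mathcal{A}_f \simeq \mathrm{mod}\text{-}B$ (with $B$ replaced by its opposite algebra, if one wishes to match conventions).

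The first step is to show that each injective envelope $I(\lambda)$ actually lies in $\mathcal{A}_f$ when $\Lambda$ is finite. Axiom (4) provides a filtration $0 = N_0 \subset N_1 \subset \cdots$ with composition factors $L(\mu(n))$ at weights $\mu(n) > \lambda$, each weight appearing only finitely often; finiteness of $\Lambda$ then forces the filtration to terminate, and together with the finiteness of $[\Delta(\lambda):L(\mu)]$ from axiom (3) we conclude $I(\lambda)$ has finite length. Setting $I = \bigoplus_{\lambda \in \Lambda} I(\lambda)$, a finite direct sum of finite-length objects, $I \in \mathcal{A}_f$ is an injective cogenerator because its socle contains every simple, and $B$ is finite-dimensional over $\mathbb{F}$. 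The functor $F := \mathrm{Hom}_{\mathcal{A}}(-, I) : \mathcal{A}_f \to B\text{-mod}$ is exact (by injectivity of $I$), faithful (since $I$ cogenerates), and lands in finite-dimensional $B$-modules. On the additive closure $\mathrm{add}(I)$, the identification $F(I^n) = B^n$ shows that $F$ is an anti-equivalence onto the category of finitely generated projective $B$-modules.

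To extend this from $\mathrm{add}(I)$ to all of $\mathcal{A}_f$, I would use that every $V \in \mathcal{A}_f$ admits an injective copresentation $0 \to V \to I^0 \to I^1$ with $I^0, I^1 \in \mathrm{add}(I)$, obtained by iteratively embedding into the injective hull. Applying $F$ converts this into a finite projective presentation of $F(V)$ in $B\text{-mod}$; the projectivity-lifting property, applied to two such presentations, yields fullness via the usual chain-map argument, and constructing $V = \ker(f)$ for $f : I^n \to I^m$ corresponding to a chosen projective presentation of a finite-dimensional $B$-module gives essential surjectivity. Post-composing with the $\mathbb{F}$-linear duality $\mathrm{Hom}_{\mathbb{F}}(-, \mathbb{F})$ transforms the contravariant equivalence into the covariant equivalence $\mathcal{A}_f \simeq \mathrm{mod}\text{-}B$ claimed in the theorem.

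The main technical obstacle is precisely the verification that the injective copresentation can be taken with both terms in $\mathrm{add}(I)$: this rests crucially on the finiteness of $\Lambda$, since otherwise the injective hull in $\mathcal{A}$ of a finite-length object need not be a finite direct sum of the $I(\lambda)$'s and in particular need not lie in $\mathcal{A}_f$. Everything else in the argument — the exactness-faithfulness-fullness package for $F$, the identification of projectives under $F$, and the Morita-type reconstruction of objects from their $F$-images — is standard once this finiteness is in hand.
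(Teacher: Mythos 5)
The paper offers no proof of this statement---it is quoted verbatim from Cline--Parshall--Scott \cite{CPS}---and your argument is correct and is essentially the one given there: finiteness of $\Lambda$ (via axioms (3) and (4)) forces each $I(\lambda)$ to have finite length, so $I=\bigoplus_{\lambda\in\Lambda}I(\lambda)$ is a finite-length injective cogenerator, and $\mathrm{Hom}_{\mathcal{A}}(-,I)$ composed with $\mathbb{F}$-linear duality yields the equivalence of $\mathcal{A}_f$ with the category of finite-dimensional modules over $\mathrm{End}_{\mathcal{A}}(I)$. You also correctly isolate the one point where finiteness of $\Lambda$ is indispensable, namely that injective hulls of finite-length objects stay in $\mathrm{add}(I)$, so the copresentation argument goes through.
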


In particular, since $\cal A_f$ is a Jordan-H\"older category, it follows from Theorem \ref{mthm} that for every $\gamma\in K_0(\cal A_f)$ and any slope function $\sigma$ with values on a continuously ordered $\mathbb R$-vector space, the moduli set $\cal C\cal A^\sigma_\gamma$ can be given a structure of projective variety.

Finite-dimensional algebras that correspond to highest weight
categories are called \emph{quasi-hereditary}. Such algebra $B$
can be characterized by the existence of a \emph{hereditary chain}
of two-sided ideals
$$0\subset J_1\subset J_2\subset \ldots \subset J_t\subset B,$$ where
$J_s/J_{s-1}=(B/J_{s-1})e_s(B/J_{s-1})$ for some idempotent $e_s$
of $B/J_{s-1}$ for all $1\leq s\leq t$ \cite{Dl1}.

If $\cal A$ is a highest weight category then there are two
canonical ways of constructing new highest weight categories. Let
$\Lambda$ be the indexing poset for $\cal A$, $\Upsilon\subset
\Lambda$ a proper nonempty subset, $\Omega=\Lambda\setminus
\Upsilon$. Denote by $\widetilde{\cal A}_{\Upsilon}$ the full subcategory of
$\cal A$ consisting of objects $M$ having composition factors
isomorphic to some $L(\mu)$, $\mu\in \Upsilon$. Since $\widetilde{\mathcal A}_{\Upsilon}$ is a Serre subcategory of $\cal A$ then the
quotient category $\cal A({\Omega})=\cal A /\widetilde{\mathcal A}_{\Upsilon}$ is defined. We say that $\Upsilon$ is an ideal of
$\Lambda$ if $\mu< \nu$ and $\nu\in \Upsilon$ implies $\mu\in
\Upsilon$. The set $\Upsilon$ is said to be finitely generated
provided $\Upsilon$ is the union of finitely many intervals $\{\mu:\mu< \nu\}$ for some $\nu\in \Upsilon$. We have the following
recollement property of highest weight categories.

\begin{theorem} (\cite{CPS}, Theorem 3.9)\label{t50}
If $\Upsilon$ is a finitely generated ideal of $\Lambda$ then
$\widetilde{\cal A}_{\Upsilon}$ is a highest weight category. Moreover, if
$\Omega$ is a finite coideal then $\cal A({\Omega})$ is a
highest weight category.
\end{theorem}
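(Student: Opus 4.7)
The plan is to verify the four defining axioms of a highest weight category for each of $\widetilde{\cal A}_\Upsilon$ and $\cal A(\Omega)$, using the corresponding data on $\cal A$. For $\widetilde{\cal A}_\Upsilon$ with $\Upsilon$ a finitely generated ideal, three of the axioms go through immediately with poset $\Upsilon$ (interval-finite because $\Lambda$ is): the simples are $\{L(\mu):\mu\in\Upsilon\}$; the standard objects $\Delta(\mu)$ already lie in $\widetilde{\cal A}_\Upsilon$ because the ideal property forces every composition factor of $\Delta(\mu)/L(\mu)$ into $\Upsilon$; and the Hom and multiplicity finiteness carry over verbatim.

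The serious work is axiom (4): constructing the injective envelope $I_\Upsilon(\lambda)$. I would define $I_\Upsilon(\lambda)$ as the directed union of all finite-length subobjects of $I(\lambda)$ whose composition factors lie in $\Upsilon$ --- equivalently, the largest subobject of $I(\lambda)$ belonging to $\widetilde{\cal A}_\Upsilon$. Injectivity in the subcategory follows from the Serre property: a map $U\to I_\Upsilon(\lambda)$ from $U\hookrightarrow V$ in $\widetilde{\cal A}_\Upsilon$ extends to a map $V\to I(\lambda)$ in $\cal A$, and its image, being a subobject of $V$, automatically lies in $I_\Upsilon(\lambda)$. For the filtration I would set $M_n:=N_n\cap I_\Upsilon(\lambda)$; then $M_1=\Delta(\lambda)\subseteq I_\Upsilon(\lambda)$ again by the ideal property, and each successive quotient $M_n/M_{n-1}$ is either zero or $L(\mu(n))$ with $\mu(n)\in\Upsilon$. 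The subtle combinatorial step is checking that every $\mu\in\Upsilon$ which appears as some $\mu(n)$ with $\mu(n)\in\Upsilon$ really contributes to the filtration (possibly after rearrangement through intermediate non-$\Upsilon$ steps); the finiteness condition that each $\mu$ occurs finitely often is then inherited.

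The coideal case for $\cal A(\Omega)$ runs in parallel, transported through the exact Serre quotient functor $\pi:\cal A\to\cal A(\Omega)$. The simples in the quotient are $\{\pi L(\mu):\mu\in\Omega\}$, and $\Omega$ finite gives interval-finiteness for free. I would take $\Delta_\Omega(\mu):=\pi\Delta(\mu)$ as the standard objects; since $\pi$ kills precisely the $L(\nu)$ with $\nu\notin\Omega$, the composition factors of $\Delta_\Omega(\mu)/\pi L(\mu)$ are exactly those $L(\nu)$ with $\nu<\mu$ in $\Omega$. Hom and multiplicity finiteness follows from the adjunction between $\pi$ and its right adjoint. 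For the injective envelope of $\pi L(\lambda)$, the finiteness of $\Omega$ combined with the ``each $\mu$ occurs finitely often'' condition in $\cal A$ means that only finitely many indices $n$ in the filtration $\{N_n\}$ of $I(\lambda)$ have $\mu(n)\in\Omega$, so pushing a suitable finite truncation of that filtration through $\pi$ produces a finite-length object in $\cal A(\Omega)$ with the required filtration, bottomed by $\pi N_1=\Delta_\Omega(\lambda)$.

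The main obstacle throughout is axiom (4): extracting an injective envelope whose canonical filtration has the correct structure. The ideal (resp.\ coideal) hypothesis is precisely what guarantees that the inherited filtration is compatible with restriction to $\Upsilon$ (resp.\ projection to $\Omega$); without it, either the first piece $\Delta(\lambda)$ could fall outside the subcategory or successive simple quotients could have weights outside the chosen subposet, and the construction would break down.
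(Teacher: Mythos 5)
The paper does not prove this statement at all: it is quoted verbatim from Cline--Parshall--Scott (\cite{CPS}, Theorem 3.9), so there is no internal argument to compare yours against. That said, your sketch follows the standard CPS line and is essentially sound. Taking $I_\Upsilon(\lambda)$ to be the largest subobject of $I(\lambda)$ lying in $\widetilde{\cal A}_\Upsilon$ and intersecting the given filtration with it is exactly how axiom (4) is verified in the ideal case, and your own observation that $M_n/M_{n-1}$ embeds in the simple object $N_n/N_{n-1}$, hence is $0$ or $L(\mu(n))$, already disposes of the ``subtle combinatorial step'' you worry about: one simply discards the indices where the quotient vanishes and renumbers, and the finiteness of occurrences of each $\mu$ is inherited. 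Two points deserve more care. First, in the coideal case you need $\pi I(\lambda)$ (equivalently $\pi N_m$ for $m$ large, since $I(\lambda)/N_m\in\widetilde{\cal A}_\Upsilon$ once $m$ exceeds every index with $\mu(n)\in\Omega$) to be injective in $\cal A(\Omega)$; this holds because $I(\lambda)$ is an essential extension of $L(\lambda)$ with $\lambda\in\Omega$ and therefore has no nonzero subobject in the Serre subcategory being killed, but exact quotient functors do not preserve injectives in general, so this must be said. Second, you never invoke the hypothesis that $\Upsilon$ is finitely generated; in \cite{CPS} that hypothesis is what guarantees that $\widetilde{\cal A}_\Upsilon$ is again a locally artinian category with enough injectives (not merely that each simple object acquires an injective envelope), and the verification of these ambient conditions, which you pass over, is precisely where it enters.
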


Let $\Lambda$ be a finite poset and $A$ be a quasi-hereditary
algebra corresponding to the category $\cal A_f$. Denote by
$e$  a complete sum of primitive idempotents representing
$\Upsilon$. Then the category $\widetilde{\cal A}_{\Upsilon, f}$ is equivalent
to the module category of $eAe$ if $\Upsilon$ is an ideal, hence
$eAe$ is quasi-hereditary. Moreover, $A/eAe$ is quasi-hereditary
if $\Omega$ is coideal of $\Lambda$.

\begin{example}[Poset Categories {\cite[Example 6.9]{PS}}]\rm
Let $(P, \leq)$ be a finite poset, $|P|$ the geometric realization
of the simplicial complex associated with $P$,
$P_{n+1}=P_n\setminus  \{$maximal elements of $P_n\}$, $P_0=P$.
Denote by $Sh(|P|)$ the category of sheaves of $\mathbb F$-vector spaces
on $|P|$ which are locally constant on the natural strata
$|P_n|\setminus |P_{n+1}|$. Then $Sh(|P|)$ is a highest weight
category which is isomorphic to the module category of the poset
$(P, \leq)$.\hfill\qedsymbol
\end{example}

The next example is the key example that originated the theory of
highest weight categories.

\begin{example}[Category $\cal O$]\label{e:cato}\rm

Let $\lie g$ be a complex  finite-dimensional simple Lie algebra, $\lie h$ be a fixed Cartan subalgebra, $\omega_i$ be the fundamental weighs,  $P,Q$ the weight and root lattices of $\lie g$, respectively. Let also $Q^+$ be the submonoid of $Q$ generated by the positive roots and recall that $\lie h^*$ can be equipped with the partial order $\mu\le\lambda$ iff $\lambda-\mu\in Q^+$. The BGG category $\cal O(\lie g)$ \cite{BGG} is the $\mathbb C$-linear category whose objects consists of $\lie g$-modules $V$ satisfying:
\begin{enumerate}
\item $V=\opl_{\mu\in\lie h^*}^{} V_\mu$, where $V_\mu= \{v\in V: hv=\mu(h)v \ \forall \ h\in\lie h\}$.
\item $\dim(V_\mu)$ is finite for all $\mu\in\lie h^*$.
\item There exist $\lambda_1,\cdots,\lambda_m\in\lie h^*$ (depending on $V$) such that $V_\mu\ne 0$ implies $\mu\le \lambda_j$ for some $j=1,\cdots,m$.
\end{enumerate}
The elements $\mu\in\lie h^*$ such that $V_\mu\ne 0$ are called the weights of $V$. It turns out that every block of $\cal O$ is equivalent to a block  $\cal A$ whose weights of its objects lie  in $P$. Moreover, in such a block there exists a unique $\lambda_0\in P$ such that the weight of the objects in $\cal A$ are bounded by $\lambda_0$ from above and there exists $V\in\cal A$ such that $V_{\lambda_0}\ne 0$. $\cal A$ is a highest weight category with poset $\Lambda$ given by $\Lambda = \{w\cdot \lambda_0\}$ where $w$ runs in the Weyl group $\cal W$ of $\lie g$ and the action of $\cal W$ in $\lie h^*$ is the so-called shifted action. In particular, $\Lambda$ is finite and, by Theorem \ref{cpshwfda}, $\cal A$ is equivalent to the module category of a (quasi hereditary) finite-dimensional algebra. If $\lambda_0$ is anti-dominant, then $\Lambda$ is a singleton and $\cal A$ is a semisimple category with a unique simple object, but, in general $\cal A$ is of wild representation type.  The objects $\Delta(\lambda),\lambda \in \Lambda$, are the so-called restricted duals of the Verma modules $M(\lambda)$.

Let us look more closely at some natural stability structure on $\cal A$. Let $R=\lie h^*_{\mathbb R}\subset \lie h^*$ be the $\mathbb R$-span of the fundamental weights and equip $R$ with the lexicographic ordering determined by a choice of ordering the nodes of the Dynkyn diagram of $\lie g$ and the usual order on $\mathbb R$.

Given an object $M$ in $\cal A$, fix $\gb x = (x_0,\cdots,x_{m-1})\in\mathbb Q^m$, where $m=|\Lambda|$, and define the slope
$$\sigma_{\gb x}([M]) = \frac{\sum_j x_j[M:L(\lambda_j)]\lambda_j}{\sum_j [M:L(\lambda_j)]}.$$
This defines a stability structure on $\cal A$ satisfying all the hypothesis of Theorem \ref{mthm}.

We now compute some examples of (semi) stable objects. We start with $\lie g =\lie{sl}_2$, even
though $\cal A$ is always of finite representation type in this case. In fact, beside the semisimple block, there is only one more class of
non equivalent blocks which is equivalent to mod-$B$, where $B$ is the 5-dimensional algebra whose underlying quiver is:

\begin{picture}(0.00,40.00)
\put(165.00,27.00){$a$}
\put(165.00,6.50){$b$}
\put(220.00,17.00){$ab=0.$}
\put(145,17.00){$_1\circ$}
\put(185.00,17.00){$\circ_2$}
\put(156.00,22.00){\vector(1,0){27.00}}
\put(183,18){\vector(-1,0){27.00}}
\end{picture}

\noindent The principal block (the one containing the trivial representation) is a representative of this class. We have $m=2,\lambda_0=0$, and  5 indecomposable objects: $L(0), L(-2)$ (the 2 irreducible objects), the Verma module $M(0)$, its
restricted dual $M^*(0)=\Delta(0)$, and the big projective
$P(-2)$. Here we have identified $P$ with $\mathbb Z$ by sending $\omega_1$ to 1. There are 4
non-split exact sequences:
\begin{gather*}
0\to L(-2)\to M(0)\to L(0)\to 0\\
0\to L(0)\to M^*(0)\to L(-2)\to 0\\
0\to L(-2)\to P(-2)\to M^*(0)\to 0\\
0\to M(0)\to P(-2)\to L(-2)\to 0
\end{gather*}

and the right regular representation of $A$ has the following structure

$$
\begin{array}{c}
2\\1\end{array}\ \oplus\  \begin{array}{c}
1\\2\\1\end{array}\ =\ A_A, 
$$
where $1$ stands for $L(-2)$ and $2$ stands for $L(0)$.

Now we compute the slopes:
\begin{gather}
\sigma(L(\lambda))= 0, \qquad\sigma(L(-2)) = -2x_2,\\
\sigma(M(0)) = \sigma(M^*(0)) =-x_2, \quad \sigma(P(-2)) = -\frac{4}{3}x_2
\end{gather}
If $x_2> 0$ then $M(0)$ is stable, while $M^*(0)$ is stable if and
only if $x_2<0$. On the other hand, $P(-2)$ is semistable if and
only if $x_2=0$, in which case the stability structure is trivial,
i.e. $V\asymp W$ for all $V,W\in\cal A$.

Now let us look at $\lie g=\lie{sl}_3$. This time the principal
block is wild so we do not attempt to characterize the semistable
objects completely. If $\cal A$ is the principal block then
$\lambda_0=0$ and
$$\Lambda = \{0,-2\omega_1+\omega_2,\omega_1-2\omega_2, -3\omega_2, -3\omega_1, -2\omega_1-2\omega_2\}=\{\lambda_0,\cdots,\lambda_5\}.$$
It is not difficult, but a bit tedious, to see that for all
$\lambda_k\in \Lambda$, there always exists a choice of $x_j,
j=1,\cdots, 5$, such that $M(\lambda_k)$ is stable.

We also remark that the blocks of the following subcategory of $\cal O$  are also highest weight categories.
Given $\lie p\subseteq \lie g$ a parabolic subalgebra, let $\mathcal O(\lie g, \lie p)$ be the
subcategory of $\mathcal O(\lie g)$ whose objects split into a sum of finite-dimensional modules of $\lie p$.
Such categories were studied in \cite{Ro}.\hfill\qedsymbol
\end{example}

The highest weight category of the next example is not equivalent to the module category of a finite-dimensional algebra. However it is the limit of some ``truncated'' highest weight subcategories to which Theorem \ref{cpshwfda} applies.

\begin{example}[A Limit Construction]\label{e:limit}\rm
Let $\lie g$ be an affine Kac-Moody algebra with a Cartan
subalgebra $\lie h$, $Q$ the root lattice and $Q^+$ the sub-monoid of $Q$
generated by the  positive roots, and $\pi=\{\alpha_1, \ldots, \alpha_n\}$
be a set of simple roots. Define the height of
$\eta = \sum_{i=1}^n k_i\alpha_i\in Q^+$ by $|\eta|=\sum_i k_i$.

Categories of truncated $\lie g$-modules were studied in \cite{RW}.
For $k\in \mathbb Z_+$, set $Q^+(k)=\{\eta\in Q^+:|\eta|>k\}$.
If $\eta=\sum_{i=1}^n k_i\alpha_i\in Q$, set $\eta^+=\sum_{j:k_j\in\mathbb Z_+} k_j\alpha_j$ and, given  $\lambda\in \lie h^*$,
$k\in \mathbb Z_+$, denote by $\Pi=\Pi(\lambda, k)$ the set of all $\mu\in \lie h^*$ such that
$(\mu - \lambda)^+\in Q^+\setminus Q^+(k)$.
Let also $\cal A(\lambda, k)$ be the full category of the category of all $\lie g$-modules consisting of those modules $V$ such that:
\begin{enumerate}
\item $V=\opl_{\mu\in\lie h^*}^{} V_\mu$, where $V_\mu= \{v\in V: hv=\mu(h)v \ \forall \ h\in\lie h\}$.
\item $\dim(V_\mu)$ is finite for all $\mu\in\lie h^*$.
\item $V_{\mu}=0$ for all $\mu\in \lie h^*\setminus \Pi$.
\end{enumerate}

Clearly, any simple object of the category $\cal A(\lambda,k)$ is
a quotient of the corresponding Verma module. We will denote an
irreducible module with highest weight $\mu$ by $L(\mu)$. It
follows from the results of \cite{RW} that $\cal A(\lambda,k)$ is
a highest weight category as well as its full subcategory $\cal
A_f(\lambda, k)$ consisting of finite-length objects. The later
category has infinitely many simple objects and hence is not a
module category for a finite dimensional algebra. Nevertheless,
this category can be ``approximated'' by certain
finite-dimensional algebras.  Indeed, consider the following
stratification of the category $\cal A_f(\lambda, k)$. Given a
positive integer $m$ denote by $\cal A_f(\lambda, k)^m$ the full
subcategory in $\mathcal A_f(\lambda, k)$ consisting of those
objects whose composition factors are isomorphic to $L(\mu)$ for
some $\mu\le \lambda$ with $|\lambda-\mu|\le m$. Let $\Omega$ be
the set of all $\mu\in \Pi$ with $|\lambda-\mu|\leq m$ (we set
$|\lambda - \mu|=0$ if $\lambda<\mu$). Then clearly $\Omega$ is a
coideal in $\Lambda$ and hence, $\cal A_f(\lambda, k)^m$ is a
highest weight category by Theorem \ref{t50}. Moreover, since
$\Omega$ is finite then $\cal A_f(\lambda, k)^m$ is equivalent to
the module category for some quasi-hereditary algebra.

We have an embedding of full subcategories
$$\cal A_f(\lambda, k)^1\subseteq \ldots \subseteq \cal A_f(\lambda, k)^m\subseteq \ldots$$
and $$\cal A_f(\lambda, k)=\lim_{\longrightarrow}\mathcal A_f(\lambda, k)^m.$$

To shorten the notation, set $\cal A=\cal A_f(\lambda,k)$ and $\cal A(m)=\cal A_f(\lambda, k)^m$.
Fix a slope function $\sigma:K_0(\cal A)\to R$, where $R$ is an $\R$-vector space with a continuous total order and let $\gamma\in K_0(\cal A)$. Then $\gamma = \sum_\mu k_\mu [L(\mu)]$ for some nonnegative integers $k_\mu$, all but finitely many equal to zero. Let 
$$m_0 = \max \{|\lambda -\mu| : k_\mu\ne 0\}.$$ It follows that $\cal A_\gamma^\sigma = \cal A(m_0)_\gamma^\sigma$ has a structure of quasi-projective variety. 
\end{example}

The results of this section are valid for a larger class of
algebras introduced in \cite{RW}, which includes symmetrizable
Kac-Moody algebras and the Witt algebra.

%%%%%%%%%%%%%%%%%%%%%%%%%%%%%%%%%%%%%%%%%%%%%%%%%

\subsection{Stratified Categories and Algebras}

In this section we consider examples of categories equivalent to
module categories of finite-dimensional algebras which are a
certain generalization of quasi-hereditary algebras.

Let $A$ be a finite-dimensional algebra and $\Lambda$ be a poset
parameterizing the isomorphism classes of simple  $A$-modules.
Then $A$ is called \emph{$\Delta$-filtered} (\cite{ADL},
\cite{Dl1}, \cite{Dl2}) with respect to the right (respectively
left) module structure if the following conditions are satisfied:

\begin{itemize} 
\item[(i)] There exists a collection of right (resp. left) $A$-modules
$\Delta(\lambda), \lambda\in \Lambda$, such that $\Delta(\lambda)$
has a unique irreducible quotient $L(\lambda)$ corresponding to
$\lambda$ and all other composition factors $L(\mu)$ satisfy
$\mu\leq \lambda$; 
\item[(ii)] The right (resp. left) regular
representation of $A$ is filtered by $\Delta(\lambda)$'s, i.e. for
each $\lambda\in \Lambda$, $\Delta(\lambda)$ is a homomorphic
image of the projective cover $P(\lambda)$ of $L(\lambda)$ and the
kernel has a finite filtration with factors $\Delta(\mu)$ with
$\lambda<\mu$.
\end{itemize}

If $A$ has either  right or left $\Delta$-filtration then it is
also called \emph{standardly stratified} (\cite{FKM1}). Note that
strandardly stratified algebras are special cases of stratified
algebras introduced in \cite{CPS1}. If $A$ has both left and right
filtrations then it is called \emph{properly stratified}. Finally,
a properly stratified algebra is quasi-hereditary if and only if
the choice of $\Delta(\lambda)$'s is the same for right and left
regular representations.

One way to obtain new standardly stratified algebras out of a
given projectively stratified (or even quasi-hereditary) algebra
$A$ is by taking $eAe$ for some idempotent $e$. Notice that even
if $A$ is quasi-hereditary, $eAe$ may not be so. Here we just list
some examples, for details see \cite{Ma}.

\begin{example}[Harish-Chandra Bimodules]\rm
Let $\lie g$ be a complex finite-dimensional simple Lie algebra and $\theta$
a central character. The category $\mathcal H$ of Harish-Chandra bimodules
consists of finitely generated algebraic $U(\lie g)$-bimodules.
Denote by $\mathcal H(\theta)$ the full subcategory of
modules with central character $\theta$ (with respect to the right action of the center).
Then $\mathcal H(\theta)$ is equivalent to a certain subcategory of $\cal O$ \cite{BG}
which in turn is equivalent to the category mod-$eAe$ for some quasi-hereditary algebra
$A$ and some idempotent $e$.\hfill\qedsymbol
\end{example}

\begin{example}[Parabolic Category $\cal O$]\rm
A parabolic generalization of the category $\cal O$ which contains
non-highest weight irreducible modules was defined in \cite{CF}
and studied extensively in \cite{FKM1}, \cite{FKM2}, \cite{FKM3}.
This category corresponds to a fixed parabolic subalgebra of a
simple finite-dimensional Lie algebra. The role of Verma modules
is played by the \emph{generalized Verma modules}. In \cite{FKM4}
this theory was extended to the case of infinite-dimensional Lie
algebras with triangular decomposition.\hfill\qedsymbol
\end{example}

Note that suitable blocks of the categories mentioned in these two
examples are equivalent. Some other equivalences of these
categories were exploited in \cite{KM}.

%%%%%%%%%%%%%%%%%%%%%%%%%%%%%%%%%%%%%%%%%%%%%%%%%

\subsection{Jordan Algebras}

Now we present an example of a category  equivalent to the module category of a finite-dimensional algebra which is neither hereditary, nor quasi-hereditary, nor
stratified.

Recall, that a Jordan $\mathbb F$-algebra is an $\mathbb F$-vector space $J$ with
a binary operation ``$\cdot$'' compatible with the vector space structure such that
any $\,a,b\in J\,$
\begin{equation} \begin{array}{c}
a\cdot b =b \cdot a \\ ((a\cdot a)\cdot b)\cdot a= (a\cdot a)\cdot (b\cdot a).
\end{array} \end{equation}

It is known that the category of finite-dimensional $J$-bimodules
is equivalent to the category of left finite-dimensional modules
over some associative algebra $U(J)$ which is called the universal
multiplicative envelope of $J$. If $J$ is finite-dimensional then
$U(J)$ is finite-dimensional as well and, hence, the whole
stability machinery can be applied to the category of
$J$-bimodules. If $J$ is a semisimple Jordan algebra then $U(J)$
is semisimple and the  variety of semistable objects is
trivial. On the other hand, if $J$ is not semisimple but
$Rad^2(J)=0$, then the category of bimodules might even be of wild
representation type and, thus, have very nontrivial moduli sets.

We point out that the algebra $U(J)$ decomposes into a product of
subalgebras $U(J)=U_0\oplus U_{\frac{1}{2}}\oplus U_1$, where
$U_0=\mathbb F$ and $U_0\oplus U_{\frac{1}{2}}$ is  the special
universal envelope of $J$. Then the module category $U(J)$-mod is
equivalent to a direct sum
$$U_0\text{-mod}\oplus U_{\frac{1}{2}}\text{-mod}\oplus U_1\text{-mod}$$
and the problem is reduced to the study of these module categories. For
detailed study of the category $U_0\oplus U_{\frac{1}{2}}-mod$ see \cite{KOS}.

%%%%%%%%%%%%%%%%%%%%%%%%%%%%%%%%%%%%%%%%%%%%%%%%%

\subsection{Finite-Dimensional Representations of Loop Algebras}

We end the section with an example of an abelian category which is
a very active research topic and is not equivalent to a module
category of a finite-dimensional algebra. Since it is a
Jordan-H\"older category, we can construct the moduli sets.
However, nothing can be said about geometric properties of these
moduli sets at the moment. This is the category of
finite-dimensional representations of the loop algebras $\tlie
g=\lie g\otimes \mathbb C[t,t^{-1}]$, where $\lie g$ is a complex
finite-dimensional simple Lie algebra. We remark that, although
this category is not a highest weight category, it is a very close
cousin of the highest weight category studied in \cite{CJ}.

Since every finite-dimensional $\tlie g$-module is in category $\cal
O(\lie g)$ when regarded as a $\lie g$-module, we can define slope
functions in exactly the same manner as we did in Example
\ref{e:cato}. However, here we have other natural choices for the
function $d$. Instead of using $\lie g$-length as in Example
\ref{e:cato}, we can use $\tlie g$-length or even dimension.
However, we do not expect such choice to significantly change the
structure of the ``generic'' moduli sets.


\begin{thebibliography}{99}
\bibitem{ADL} I.Agoston, V.Dlab, E.Lukacs, Stratified algebras,
C.R.Math.Rep. Acad.Sci. Canada 20 (1998), 20-25.

\bibitem{BBF}  V.Bekkert, G.Benkart, V.Futorny, Weight modules for
Weyl algebras, in ``Kac-Moody Lie algebras and related topics'',
Contemp. Math 343 Amer. Math. Soc., Providence, RI, 2004, 17-42.

\bibitem{BG}
I. Bernstein, I. Gelfand,
Tensor product of finite and infinite-dimensional representations of semisimple Lie algebras,
Compositio Math. 41 (1980), no.2, 245-285.

\bibitem{BGG}
I.N.~Bernstein, I.M.~Gel'fand, S.I.~Gel'fand,
Category of $\mathfrak{g}$ modules,
Funct. Anal. Appl. {\bf 10} (1976), 87--92.

\bibitem{Br}
T. Bridgeland,
Stability conditions on triangulated categories.
To appear in Annals of Math.
Preprint math.AG/0212237.

\bibitem{CJ}
V. Chari, J. Greenstein,
Current algebras, highest weight categories and quivers, Preprint math.RT/0612206.

\bibitem{CPS}
E. Cline, B. Parshall, L. Scott,
Finite dimensional algebras and highest weight categories,
J.Reine Agne. Math. 391 (1988), 85-99.

\bibitem{CPS1}
E. Cline, B. Parshall, L. Scott,
Stratifying endomorphism algebras,
Mem. Amer. Math. Soc. 124 (1996), n.591.

\bibitem{CF} A.Coleman, V.Futorny, Stratified $L$-modules,
J.Algebra 163 (1994), no.1, 219-234.

\bibitem{Dl1}
V. Dlab, Quasi-hereditary algebras revisited, An. St. Univ.
Ovidius Constantza 4 (1996), 43-54.

\bibitem{Dl2}
V. Dlab,
Properly stratified algebras,
C. R. Acad. Sci. Paris Ser. I Math 331 (2000) 191-196.

\bibitem{FKM1}
V. Futorny, S. Koenig, V. Mazorchuk,
Catgeories of induced modules and standardly strtified algebras,
Algebr. Represent. Theory 5 (2002), no.3, 259-276.

\bibitem{FKM2}
V. Futorny, S. Koenig, V. Mazorchuk,
A combinatorial description of blocks in $\cal O(P,\Lambda)$ associated with
$sl(2)$-induction,
J. Algebra 231 (2000), no.1, 86-103.

\bibitem{FKM3}
V. Futorny, S. Koenig, V. Mazorchuk,
S-subcategories in $\cal O$,
Manuscripta Math. 102 (2000), no.4, 487-503.

\bibitem{FKM4}
V. Futorny, S. Koenig, V. Mazorchuk,
Categories of induced modules for Lie algebras with triangular decomposition,
Forum Math. 13 (2001), 641-661.


\bibitem{GS} D.Grantcharov, V.Serganova, Category of
$sp(2n)$-modules with bounded weight multiplicities, Moscow Math.
J. to appear.


\bibitem{GKR}
A. Gorodentscev, S. Kuleshov, A. Rudakov,
Stability data and t-structures on a triangulated category.
Izv. Math. {\bf 68}, 749-781 (2004).
%Preprint math.AG/0312442.

\bibitem{J}
D. Joyce,
Configurations in abelian categories, III. Stability conditions and invariants.
Preprint math.AG/0410267.

\bibitem{K}
A. King,
Moduli of representations of finite dimensional algebras.
Quart. J. Math. {\bf 45}, 515-530 (1994).

\bibitem{KM}
S. Koenig, V. Mazorchuk,
Enright's completions and injectively copresented modules.
Preprint.

\bibitem{KOS}
I. Kashuba, S. Ovsienko, I. Shestakov,
Jordan algebras versus associative.
Preprint RT 2005-10, University S\~ao Paulo.

\bibitem{Ma}
V. Mazorchuk,
Stratified algebras arising in Lie theory.
U.U.D.M. Report 2002:37.

\bibitem{PS}
B. Parshall, L. Scott,
Derived categories, quasi-hereditary algebras, and algebraic groups,
Math. Lecture Series, n.3, Carleton Univ. (1988), 1-105.

\bibitem{Ro}
A. Rocha-Caridi,
Splitting criteria for $\lie g$-modules induced from a parabolic and the
Bernstein-Gelfand-Gelfand resolution of a finite dimensional
irreducible $\lie g$-module,
Trans. Amer. Math. Soc. 262 (1980), 335-366.

\bibitem{RW}
A. Rocha-Caridi, N. Wallach,
Projective modules over graded Lie algebras. I,
Math. Z. 180 (1982), 151-177.

\bibitem{Ru}
A. Rudakov,
Stability on abelian categories.
J. Alg. {\bf 197}, 231-245 (1997).

\bibitem{Sch}
A. Schofield,
Birational classification of moduli spaces of representations of
quivers.
Indag. Mathem. {\bf 12}, 407-432 (2001).

\end{thebibliography}
\end{document}